      \newcommand{\href}[2]{#2}
\newcommand{\diamup}{\mc{D}}
\DeclareMathOperator{\inter}{\rm{int}}
\DeclareMathOperator{\bd}{\partial}
\DeclareMathOperator{\cl}{cl}
\DeclareMathOperator{\diam}{\rm{diam}}
\DeclareMathOperator{\fix}{\rm{Fix}}
\newcommand{\mc}{\mathcal}
\newcommand{\ol}{\overline}
\newcommand{\til}{\widetilde}
\newcommand{\R}{\mathbb{R}}\newcommand{\N}{\mathbb{N}}
\newcommand{\T}{\mathbb{T}}
\renewcommand{\SS}{\mathbb{S}}
\newcommand{\sm}{\setminus}
\newcommand{\id}{\mathrm{Id}}
\newcommand{\ie}{i.e.\ }
\newtheorem{theorem}{Theorem} 
\newtheorem{corollary}[theorem]{Corollary}
\newtheorem{lemma}[theorem]{Lemma}
\newtheorem*{proposition*}{Proposition}
\newtheorem*{theorem*}{Theorem}
\newtheorem*{claim*}{Claim}
\theoremstyle{definition}
\theoremstyle{remark}
\title[A triple boundary lemma]{A triple boundary lemma\\ for surface homeomorphisms}
\author{Andres Koropecki}
\address{Universidade Federal Fluminense, Instituto de Matem\'atica e Estat\'\i stica, Rua M\'ario Santos Braga S/N, 24020-140 Niteroi, RJ, Brasil}
\email{ak@id.uff.br}
\author{Patrice Le Calvez}
\address{Sorbonne Universit\'e, Universit\'e Paris Diderot, CNRS, Institut de Math\'ematiques de Jussieu-Paris Rive Gauche, IMJ-PRG, F-75005, Paris, France}
\email{patrice.le-calvez@imj-prg.fr}
\author{Fabio Armando Tal}	\address{Instituto de Matem\'atica e Estat\'\i stica, Universidade de S\~ao Paulo, Rua do Mat\~ao 1010, Cidade Universit\'aria, 05508-090 S\~ao Paulo, SP, Brazil}
\email{fabiotal@ime.usp.br}
\begin{document}

\begin{abstract} Given an orientation-preserving and area-preserving homeomorphism $f$ of the sphere, we prove that every point which is in the common boundary of three pairwise disjoint invariant open topological disks must be a fixed point. As an application, if $K$ is an invariant Wada type continuum, then $f^n|_K$ is the identity for some $n>0$. Another application is an elementary proof of the fact that invariant disks for a nonwandering homeomorphism homotopic to the identity in an arbitrary surface are homotopically bounded if the fixed point set is inessential. The main results in this article are self-contained.
\end{abstract}

\maketitle

The aim of this short note is to prove the following result:

\begin{theorem} \label{th:3bd-free}
If an orientation-preserving homeomorphism $f\colon \SS^2\to \SS^2$ has no wandering points and $B$ is a closed topological disk which intersects three pairwise disjoint $f$-invariant open topological disks, then $f(B)\cap B\neq \emptyset$.
\end{theorem}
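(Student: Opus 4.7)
The plan is a proof by contradiction: assume $f(B)\cap B=\emptyset$, so $B$ is a free disk, and derive a contradiction from the three invariant disks $U_1,U_2,U_3$ meeting $B$.

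First I would set up the topological picture. No $U_i$ can be contained in $\inter B$, for otherwise $f$-invariance would give $U_i=f(U_i)\subset f(\inter B)\subset \SS^2\setminus B$, contradicting $U_i\cap B\neq\emptyset$. Hence each $U_i$ meets both $\inter B$ and $\SS^2\setminus B$; being connected and open, $U_i\cap \partial B$ contains an open sub-arc $\alpha_i$ of the Jordan curve $\partial B$, and the three arcs $\alpha_i$ are pairwise disjoint. Moreover, since $U_i$ is open, invariant, and $f$ is nonwandering on $\SS^2$, the restriction $f|_{U_i}$ is a nonwandering orientation-preserving homeomorphism of the topological plane $U_i\cong\R^2$, and therefore has a fixed point $p_i\in U_i$ by the standard consequence of Brouwer's plane translation theorem (a fixed-point-free orientation-preserving plane homeomorphism has a wandering neighborhood of every point). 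Freeness of $B$ forces $p_i\notin B$.

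Next I would exploit the combinatorial picture formed by the two disjoint Jordan curves $\partial B$ and $f(\partial B)$. Relabel so that $\alpha_1,\alpha_2,\alpha_3$ lie in this cyclic order on $\partial B$; pick $a_i\in\alpha_i$ and set $b_i:=f(a_i)\in U_i\cap f(\partial B)$. Since $f$ preserves orientation, the $b_i$ lie on $f(\partial B)$ in the same cyclic order as the $a_i$ on $\partial B$. These curves bound a closed annular region $A=\SS^2\setminus(\inter B\cup \inter f(B))$, and in each $U_i$ one may join $a_i$ to $b_i$ by an arc $\gamma_i$. This gives three arcs lying in pairwise disjoint invariant regions, each connecting one boundary component of $A$ to the other in matching cyclic orders on the two ends. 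My plan is then to push these arcs inside an invariant annular region --- for instance $\SS^2\setminus\{p_j,p_k\}$ for two of the fixed points, which is invariant and topologically an annulus --- lift to the universal cover, and exploit the translation structure near each end (coming from the Brouwer-type behaviour around $p_j$ and $p_k$) to derive a contradiction with $f(B)\cap B=\emptyset$. Equivalently, I would try to extract a periodic free disk chain from this configuration and invoke a Franks-type lemma to force a fixed point inside $B$.

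The main obstacle is this final step: converting cyclic-order data on two disjoint Jordan curves into a genuine dynamical contradiction. The triple hypothesis is essential --- two invariant disks alone do not suffice --- and the two delicate points are (i) ensuring the connecting arcs $\gamma_i$ can be chosen inside an invariant annular region, rather than wandering through $\inter B$ or $\inter f(B)$, and (ii) turning the resulting rotation/translation information around the fixed points into an incompatibility with $B$ being free.
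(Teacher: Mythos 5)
Your setup matches the paper's: assume $f(B)\cap B=\emptyset$, note each $U_i$ meets $\partial B$, pick points $a_i\in U_i\cap\partial B$, use orientation preservation to see that $f(a_1),f(a_2),f(a_3)$ appear on $f(\partial B)$ in the same cyclic order, and try to join $a_i$ to $f(a_i)$ inside $U_i$. But the step you flag as ``delicate point (i)'' is not a technicality to be smoothed over later --- it is the entire content of the theorem, and your proposal does not contain the idea needed to carry it out. What is required is that each connecting arc $\sigma_i\subset U_i$ meet $B\cup f(B)$ \emph{only at its endpoints}; an arbitrary arc in $U_i$ from $a_i$ to $f(a_i)$ may cross $\inter B$ and $\inter f(B)$ repeatedly, and then no cyclic-order contradiction follows. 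The paper isolates exactly this statement as its key lemma (essentially Lemma 4.6 of [KLN15]): one looks at the connected components of $\partial B\cap U_i$ as free cross-cuts of $U_i$, partially orders them by inclusion of the cross-sections $D(\alpha)$ not containing $f(\alpha)$, and shows that some component $\alpha$ and its image $f(\alpha)$ are simultaneously maximal; the nonwandering hypothesis enters precisely here, because an infinite chain of non-maximal elements would produce a ``boundary trapping region'' $W$ with $f(\cl_U W)\subset W$, all of whose boundary points are wandering. Maximality then lets one connect $\alpha$ to $f(\alpha)$ avoiding every other component, hence avoiding $B\cup f(B)$. Your proposal never uses the nonwandering hypothesis for the arc construction at all (only to produce Brouwer fixed points $p_i$, which the paper does not need), and without it the desired arcs simply need not exist --- the lemma is false for general homeomorphisms with invariant disks.

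Your proposed route for closing the argument is also off-track. Passing to the invariant annulus $\SS^2\setminus\{p_j,p_k\}$ does not help with point (i), since $B$ and $f(B)$ sit inside that annulus and removing two fixed points of $f$ imposes no constraint on how an arc in $U_i$ meets them; and the appeal to lifts, translation structure at the ends, or a Franks-type free disk chain is left entirely unexecuted. In fact none of that machinery is needed: once the three arcs $\sigma_i$ avoiding $B\cup f(B)$ are in hand, the paper concludes with a short Jordan-curve argument --- the loop $\eta=\gamma_{13}*\sigma_3*\gamma_{31}'*\sigma_1^{-1}$ bounds a region $D$ lying to its right, orientation preservation forces $\sigma_2$ (minus endpoints) into $D$ while $f(a_2)\notin\overline D$, a contradiction. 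So the gap is concrete: you are missing the maximal cross-cut lemma (or any substitute mechanism converting ``no wandering points'' into the existence of arcs from $z$ to $f(z)$ disjoint from $B\cup f(B)$), and the endgame you sketch in its place would not be needed if you had it, nor does it work without it.
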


As an immediate consequence one has the following:
\begin{corollary}
Under the same hypotheses, every point which is in the boundary of three pairwise disjoint open invariant topological disks must be a fixed point of $f$.
\end{corollary}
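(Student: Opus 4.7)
The plan is to argue by contradiction and reduce directly to Theorem~\ref{th:3bd-free}. Suppose $x$ lies in the common boundary of three pairwise disjoint $f$-invariant open topological disks $U_1,U_2,U_3$, but that $f(x)\neq x$. Since $f$ is continuous, there is an open neighborhood $V\ni x$ with $f(V)\cap V=\emptyset$; taking a small enough closed topological disk $B\subset V$ with $x\in\inter(B)$ (for instance, a closed Euclidean disk in a chart around $x$) then yields $f(B)\cap B\subset f(V)\cap V=\emptyset$.

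Because $x\in \partial U_i$ for $i=1,2,3$, every neighborhood of $x$ meets each $U_i$, and in particular $B\cap U_i\neq\emptyset$. Thus $B$ is a closed topological disk meeting three pairwise disjoint $f$-invariant open disks, so Theorem~\ref{th:3bd-free} applies and gives $f(B)\cap B\neq\emptyset$. This contradicts the choice of $B$, so $f(x)=x$.

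I do not anticipate any real obstacle: the work is entirely in Theorem~\ref{th:3bd-free}, and the corollary is a one-step application. The only minor point to check is the construction of the disk $B$, which is a routine use of continuity of $f$ in a local chart, together with the observation that belonging to $\partial U_i$ guarantees that $U_i$ meets every neighborhood of $x$.
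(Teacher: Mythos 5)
Your argument is correct and is exactly the one-step reduction the paper has in mind: the paper states the corollary as an immediate consequence of Theorem~\ref{th:3bd-free} without giving a proof, and your construction of a small closed disk $B$ around a non-fixed point $x$ with $f(B)\cap B=\emptyset$, which then meets all three disks because $x$ lies on each boundary, is the standard way to make that precise.
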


Since area-preserving homeomorphisms have no wandering points, these result apply to any area-preserving homeomorphism. As an example application, we state the following: 

\begin{corollary}
If a continuum $\Lambda \subset \SS^2$ is the boundary of three different connected components of its complement and $f$ is an area-preserving homeomorphism such that $f(\Lambda)=\Lambda$, then there is $n\in \N$ such that $f^n|_{\Lambda} = \id$.
\end{corollary}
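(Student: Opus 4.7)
The strategy is to find an iterate $f^n$ that fixes three of the complementary components of $\Lambda$ and then to invoke the preceding Corollary (every point on the common boundary of three pairwise disjoint invariant open disks is a fixed point).

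Since $f(\Lambda)=\Lambda$, the map $f$ permutes the connected components of $\SS^2\setminus\Lambda$. Let $\mc{W}$ denote the set of components $U$ with $\partial U=\Lambda$; by hypothesis $\mc{W}$ has at least three elements. Each $U\in\mc{W}$ is a topological open disk: writing $V$ for a component of $\SS^2\setminus\Lambda$ other than $U$, one has $\SS^2\setminus U=\Lambda\cup\bigcup_V V$, and each such $V$ satisfies $\partial V\subseteq\Lambda$ and $\partial V\neq\emptyset$, so $V\cup\Lambda$ is connected; hence $\SS^2\setminus U$ is a union of connected sets all containing $\Lambda$, so it is connected, and $U$ is a simply connected open subset of $\SS^2$, \ie a topological disk.

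The crucial dynamical observation is that every $U\in\mc{W}$ has finite $f$-orbit in $\mc{W}$. Since $U$ is open with positive area, Poincar\'e recurrence provides $n_k\to\infty$ with $f^{n_k}(U)\cap U\neq\emptyset$; but $f^{n_k}(U)$ is again a component of $\SS^2\setminus\Lambda$, and two distinct components are disjoint, so $f^{n_k}(U)=U$ for infinitely many $k$, forcing the orbit of $U$ to be finite.

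Now pick three distinct $U_1,U_2,U_3\in\mc{W}$ with orbit lengths $\ell_1,\ell_2,\ell_3$ and set $n=2\,\mathrm{lcm}(\ell_1,\ell_2,\ell_3)$; the factor $2$ makes $f^n$ orientation-preserving in case $f$ is not. Then $f^n$ is orientation- and area-preserving (hence nonwandering), and $U_1,U_2,U_3$ are three pairwise disjoint $f^n$-invariant open topological disks with common boundary $\Lambda$. Applying the preceding Corollary to $f^n$, every point of $\Lambda$ is a fixed point of $f^n$, \ie $f^n|_\Lambda=\id$. The main conceptual obstacle is the possibility that $\Lambda$ has infinitely many complementary components permuted by $f$ in arbitrarily long orbits, and Poincar\'e recurrence is precisely what rules this out.
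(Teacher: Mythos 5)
Your proof is correct and follows essentially the same route as the paper: area-preservation forces each complementary component to be periodic, one passes to an orientation-preserving iterate $f^n$ fixing three components whose common boundary is $\Lambda$, and the preceding Corollary finishes. The paper states this in a single sentence and leaves the supporting facts (that the components in question are open topological disks, and that they have finite orbit) implicit, whereas you spell them out; your Poincar\'e-recurrence step could be replaced by the even simpler observation that a pairwise-disjoint family of open sets of fixed positive area in $\SS^2$ must be finite, but either way the argument is sound.
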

\begin{proof}
The fact that $f$ is area-preserving implies that each connected component of $\SS^2\sm \Lambda$ is periodic, so one may choose $n$ such that $f^n$ is orientation-preserving and leaves invariant three connected components of $\SS^2\sm \Lambda$ which have $\Lambda$ as their boundaries and the result follows from the previous corollary.
\end{proof}

This applies in particular to any Wada-type continuum (\ie a continuum $\Lambda$ whose complement has more than two connected components and the boundary of each such component is equal to $\Lambda$). Hence in the area-preserving setting, Wada-type continua can only appear as invariant sets if they have trivial dynamics.
This is in contrast with the dissipative setting, where these types of continua appear frequently with a rich dynamics; for example the Plikyn attractor or more generally any transitive hyperbolic attractor on the sphere.

As a second application, we obtain an elementary proof of one of the main results of \cite{kt-strictly} and \cite{kt-fully}, which states that in an orientable surface $S$, if $f\colon S\to S$ is a homeomorphism homotopic to the identity with an invariant open topological disk $U$ and $f$ has no wandering points (for example, if $f$ is area-preserving), and if the fixed point set of $f$ is inessential (\ie its inclusion in $S$ is homotopic to a point in $S$), then any open invariant topological disk must be homotopically bounded. The latter means that any lift of the disk to the universal covering space is relatively compact\textcolor{blue}{.} 
The proofs given in \cite{kt-strictly} and \cite{kt-fully} are involved and rely on the use of equivariant Brouwer theory \cite{lecalvez-equivariant} and maximal isotopies \cite{jaulent}. Using the results from this paper, one obtains a direct and simple proof. See \S\ref{sec:bounded} for more details.

Theorem \ref{th:3bd-free} is a special case of a more general result. In order to state it, let us introduce some definitions. 
Suppose $U$ is an open topological disk. A \emph{cross-cut} of $U$ is an open-ended simple arc $\alpha$ in $U$ which extends to a compact arc joining two different points of $\bd U$. \footnote{some definitions, as in \cite{kln}, allow these two points to coincide with an additional condition. Here it will suffice to use the more restrictive definition.}
Each cross-cut $\alpha$ separates $U$ into exactly two connected components called \emph{cross-sections} of $U$. 

Let $f\colon S\to S$ be an orientation-preserving homeomorphism such that $f(U)=U$.
Let $W\subset U$ be an open set, and denote by $\cl_U W$ the closure of $W$ in $U$. We say that $W$ is a \emph{positive boundary trapping region} for $f$ in $U$ if $f(\cl_U W) \subset W$ and $W$ is the union of a family of pairwise disjoint cross-sections of $U$ that is locally finite in $U$. A \emph{negative boundary trapping region} for $f$ is a positive boundary trapping region for $f^{-1}$, and a \emph{boundary trapping region} is a set which is either a positive or a negative boundary trapping region for $f$ in $U$.
We say that a loop $\gamma$ in $S$ is \emph{trapping} in $U$ if there is a boundary trapping region $W$ for $f$ in $U$ such that $\bd_U W \subset \gamma$, and otherwise we say that $\gamma$ is non-trapping.

This is the key result of this note:
\begin{theorem}\label{th:3bd-trap}
Suppose $f\colon \SS^2\to \SS^2$ is an orientation-preserving homeomorphism, and $B$ is a closed topological disk such that $f(B)\cap B=\emptyset$. If $B$ intersects three pairwise disjoint open $f$-invariant topological disks, then $\bd B$ is trapping in one of the three disks.
\end{theorem}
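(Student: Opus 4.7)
The plan is to decompose each $\bd B\cap U_i$ into cross-cuts of the invariant disk $U_i$, classify them by how $f$ acts, and use the three-disk structure of $\SS^2$ to force a trapping configuration in one of the disks.

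First, I would set up the cross-cut structure. Each connected component of $\bd B\cap U_i$ is an open arc in the Jordan curve $\bd B$ whose closure in $\SS^2$ has endpoints in $\bd U_i$, so generically it is a cross-cut of $U_i$. The degenerate possibilities—that the whole curve $\bd B$ lies in $U_i$, or that the two endpoints coincide—are edge cases I would dispose of separately, noting that in each such situation $\bd B$ contains a closed loop in $\cl U_i$ bounding a topological disk in $U_i$, from which a trapping region is produced essentially for free. In the generic case, each cross-cut $\alpha$ separates $U_i$ into two cross-sections $S^+(\alpha)$ and $S^-(\alpha)$, labelled so that points of $S^+(\alpha)$ close to $\alpha$ lie in $\inter B$. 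The hypothesis $f(B)\cap B=\emptyset$ gives $f(\bd B)\cap\bd B=\emptyset$, so the image $f(\alpha)$ is disjoint from $\bd B$ and lies in exactly one of $S^\pm(\alpha)$; moreover $f(\alpha)$ is itself a cross-cut of $U_i$ since $f(\bd U_i)=\bd U_i$. I then assign a sign $\varepsilon(\alpha)\in\{+,-\}$ to each cross-cut according to which cross-section contains $f(\alpha)$.

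The crux is to show that for some $i$ the signs $\varepsilon$ admit a locally finite subfamily of pairwise disjoint cross-sections whose union forms a trapping region bounded by cross-cuts in $\bd B$. I would argue by contradiction: if $\bd B$ is non-trapping in all three $U_i$, then for each $i$ there is an obstruction to coherence of the signs, encoded by the cyclic ordering of cross-cuts along $\bd U_i$ together with the sign pattern. Using that $\bd B$ is a single Jordan curve on $\SS^2$ meeting the three disjoint invariant disks, and that $f(\bd B)$ is likewise a Jordan curve disjoint from $\bd B$ but meeting all three, I would combine the three local obstructions via the Jordan curve theorem on $\SS^2$ and the orientation-preservation of $f$ into a global obstruction that produces a point of $f(B)\cap B$, contradicting the hypothesis.

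The main obstacle is precisely this global step. Once the correct disk $U_i$ and candidate region $W$ are identified, verifying $f(\cl_{U_i}W)\subset W$ (or the reverse inclusion for negative trapping) should reduce to a local check at each bounding cross-cut using the sign $\varepsilon$, with disjointness and local finiteness coming from the compactness of $\bd B$ and the standard cross-cut extension property. The hard part is the combinatorial/topological argument patching together three local cross-cut obstructions on pairwise disjoint invariant disks and leveraging the rigidity of $\SS^2$ together with the orientation-preserving nature of $f$ to extract a contradiction; I expect this is where the essential content of the proof lives.
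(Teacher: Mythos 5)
You have the right global shape---assume $\bd B$ is non-trapping in all three disks and combine local information from each into a global topological contradiction on $\SS^2$---but the proposal stops short of the two steps that actually carry the proof, and you explicitly flag the second as an open difficulty. On the local side, a sign $\varepsilon(\alpha)$ recording which cross-section contains $f(\alpha)$ is not by itself the right output. What non-trapping really delivers (this is Lemma~\ref{lem:mccl}) is an arc $\sigma_i\subset U_i$ from a point $z_i\in\bd B$ to $f(z_i)\in f(\bd B)$ whose interior is disjoint from $B\cup f(B)$. Extracting $\sigma_i$ requires a partial order on free cross-cuts ($\alpha\prec\beta$ iff $D(\alpha)\subset D(\beta)$, where $D(\alpha)$ is the cross-section \emph{not} containing $f(\alpha)$), the fact that every cross-cut is dominated by a $\prec$-maximal one, and---crucially---the existence of a maximal $\alpha\in\mc C_0$ with $f(\alpha)$ also maximal. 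Non-trapping enters exactly there: its failure produces an infinite chain $\alpha_0,\alpha_1,\dots$ with $f(\alpha_i)\prec\alpha_{i+1}$, whose cross-sections assemble into a boundary trapping region. Your ``obstruction to coherence of the signs along $\bd U_i$'' gestures at this but is not the statement needed and does not by itself yield $\sigma_i$.

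The global step is also not carried out. Given $\sigma_1,\sigma_2,\sigma_3$, the paper orients $\gamma=\bd B$ so that $\inter B$ is locally on the left (hence each $\sigma_i$ leaves $z_i$ on the right of $\gamma$ and arrives at $z_i'=f(z_i)$ on the right of $\gamma'=f(\gamma)$), relabels so that $z_2$ lies on the positive subarc $\gamma_{13}$ from $z_1$ to $z_3$, forms the simple loop $\eta=\gamma_{13}*\sigma_3*\gamma'_{31}*\sigma_1^{-1}$, and takes the complementary component $D$ lying to the right of $\eta$. Then $D$ is disjoint from $B\cup f(B)$, yet $\sigma_2$ enters $D$ near $z_2$ while $z_2'$ is excluded from $\ol D$---a contradiction. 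This uses orientation-preservation twice: to put $\inter f(B)$ on the left of $\gamma'$, and to ensure $z_1',z_2',z_3'$ inherit the cyclic order of $z_1,z_2,z_3$, so that $z_2'\notin\gamma'_{31}$. You correctly name the Jordan curve theorem and orientation as the tools, but you neither construct $\eta$ nor locate the contradiction; as you yourself write, that is ``where the essential content of the proof lives,'' and it is missing.
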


Note that if $U$ has a boundary trapping region $W$, then every point of $\bd_U W$ is wandering, so in particular a homeomorphism without wandering points cannot have any boundary trapping regions. Thus Theorem \ref{th:3bd-free} and its corollaries follow immediately from Theorem \ref{th:3bd-trap}.

\subsection{Maximal cross-cuts}

A key element for the proof of our main result is the following lemma. Its proof is essentially contained in \cite[Lemma 4.6]{kln}. In order to keep this article self-contained we present a proof here as well.

\begin{lemma} \label{lem:mccl} Let $S$ be an orientable surface, $f\colon S\to S$ an orientation-preserving homeomorphism, and $U$ be an invariant topological disk. Suppose that $B$ is a closed topological disk such that its interior intersects $\bd U$ and $f(B)\cap B=\emptyset$. If $\bd B$ is non-trapping, then there exists an arc $\sigma$ in $U$ joining a point $z\in B$ to $f(z)\in f(B)$ which intersects $B \cup f(B)$ only at its endpoints.
\end{lemma}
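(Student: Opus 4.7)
The plan is to argue by contradiction: assume no such arc $\sigma$ exists, and construct a boundary trapping region $W$ for $f$ in $U$ with $\bd_U W\subset\bd B$, contradicting the non-trapping hypothesis on $\bd B$. Since $\inter(B)$ meets $\bd U$, the Jordan curve $\bd B$ crosses $\bd U$, so $\bd B\cap U$ is a locally finite (in $U$) disjoint union of open arcs $\{\alpha_i\}_{i\in I}$, each a cross-cut of $U$ (the degenerate case in which both endpoints of some $\alpha_i$ limit on the same point of $\bd U$ is handled by a small perturbation of $B$ or a separate, easy argument). The analogous decomposition holds for $\bd f(B)\cap U=\bigcup_i f(\alpha_i)$.

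I would take $W$ to be the union of those components $\mc{C}$ of $U\sm B$ that meet $f(B)\cap U$. By construction $W$ is open in $U$, contains $f(B)\cap U$, is disjoint from $B$, and $\bd_U W\subset\bd B\cap U$. A standard planar reorganization---choosing, for each such $\mc{C}$, an outermost bounding cross-cut relative to $f(B)\cap\mc{C}$ and taking its cross-section on the $\mc{C}$-side---rewrites $W$ (or a slight enlargement of it) as a locally finite disjoint union of cross-sections of $U$, each bounded by a single $\alpha_i\subset\bd B$, as required by the definition of a trapping region.

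For the trapping condition $f(\cl_U W)\subset W$, the boundary part is immediate: $f(\bd_U W)\subset f(\bd B\cap U)=\bd f(B)\cap U\subset f(B)\cap U\subset W$. For the interior part $f(W)\subset W$, I argue by contradiction: if some $x\in W$ has $f(x)\notin W$, then either $f(x)\in B$ or $f(x)$ lies in a component of $U\sm B$ that does not meet $f(B)$. Letting $\mc{C}$ be the component of $W$ containing $x$, picking $y\in\mc{C}\cap\bd f(B)$ with $y=f(z)$ for some $z\in\bd B\cap U$, and joining $x$ to $y$ by a path $\tau\subset\mc{C}$, a first-crossing analysis of $f(\tau)$ (or equivalently a last-crossing analysis of $\tau$ under $f^{-1}$) against $B\cup f(B)$ produces a specific point $z'\in\bd B\cap U$ together with an arc from $z'$ to $f(z')$ whose interior lies in $U\sm(B\cup f(B))$, yielding the forbidden~$\sigma$.

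I expect the main obstacle to be precisely this last extraction. The ``no $\sigma$'' hypothesis is a constrained condition---it only forbids arcs whose two endpoints are paired by $f$---while an abstract failure of $f$-invariance of $W$ naturally produces only an unconstrained arc joining \emph{some} point of $B$ to \emph{some} point of $f(B)$. Bridging the gap requires a careful choice of the first-crossing point, exploiting that both $B$ and $f(B)$ are topological disks so that $\bd B\cap U$ and $\bd f(B)\cap U$ organize into a clean cross-cut structure, and then a continuity argument to identify the correct pair $(z',f(z'))$ in the closure of a single component of $U\sm(B\cup f(B))$.
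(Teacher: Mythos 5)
Your overall strategy (contrapositive: ``no $\sigma$'' $\Rightarrow$ build a trapping region) is the right general shape, but the proposal has a genuine gap exactly where you yourself flag it, and it is not a technicality that a ``first-crossing analysis'' will fix. The lemma does not merely ask for an arc in $U\sm(B\cup f(B))$ from $\bd B$ to $\bd f(B)$ --- such arcs exist almost trivially --- it asks for an arc whose endpoints are \emph{paired by} $f$, i.e.\ $z$ and $f(z)$. A failure of invariance of your $W$ (some $x\in W$ with $f(x)\in B$, say) only hands you a path from an unconstrained point of $B$ to an unconstrained point of $f(B)$; nothing in the sketch forces the two endpoints to correspond under $f$, and no choice of ``first crossing'' along $f(\tau)$ does this for you, because $f$ moves the crossing points as well. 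The paper's proof is organized precisely to solve this pairing problem: it orders the free cross-cuts $\alpha\subset\bd B\cap U$ (together with their images) by $\alpha\prec\beta \Leftrightarrow D(\alpha)\subset D(\beta)$, shows that if no cross-cut $\alpha$ has \emph{both} $\alpha$ and $f(\alpha)$ maximal then one can chain maximal elements with $f(\alpha_i)\prec\alpha_{i+1}$ (or the analogous backward chain), and the union $\bigcup_i D(\alpha_i)$ is then a locally finite union of pairwise disjoint cross-sections, hence a boundary trapping region with $\bd_U W\subset\bd B$; finally, for a simultaneously maximal pair $(\alpha,f(\alpha))$ no other cross-cut separates $\alpha$ from $f(\alpha)$ (via the separation theorem for closed sets), so \emph{any} $z\in\alpha$ can be joined to $f(z)\in f(\alpha)$ avoiding all other cross-cuts. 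That maximality argument is the missing idea; your sketch contains no substitute for it.

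There is a second, related problem: your candidate $W$ (components of $U\sm B$ meeting $f(B)$) is not of the form required by the definition of a boundary trapping region. Such components are in general bounded in $U$ by many cross-cuts of $\bd B$, not one, and the proposed repair --- replacing each component by the cross-section of an ``outermost'' bounding cross-cut, or ``a slight enlargement'' of $W$ --- is not innocent: the enlarged cross-sections may contain points of $B$ and of other components, need not be pairwise disjoint, and the trapping inclusion $f(\cl_U W)\subset W$ (which you only verify on $\bd_U W$, the easy part) is not preserved by such an enlargement. So even granting the extraction step, the construction of $W$ as a locally finite disjoint union of cross-sections with $f(\cl_U W)\subset W$ and $\bd_U W\subset\bd B$ is unproved. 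In short: the proposal correctly identifies the target but does not close either the pairing step or the structure of $W$; both are handled in the paper by the maximal cross-cut machinery, which you would essentially have to reconstruct.
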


\begin{proof}
Consider the family $\mc C_0$ of all connected components of $\bd B\cap U$. Note that $\mc C_0$ is nonempty, since $B$ intersects $U$ and cannot contain $U$ entirely as $f(B)\cap B=\emptyset$.
Each element of $\mc C_0$ is a free (\ie disjoint from its image by $f$) cross-cut of $U$. 

If $\alpha$ is any free cross-cut of $U$, we write $U\sm \alpha = D(\alpha) \cup D'(\alpha)$ where $D'(\alpha)$ is the connected component of $U\sm \alpha$ containing $f(\alpha)$ and $D(\alpha)$ is the remaining component. 
One may easily verify that $f(D(\alpha)) = D(f(\alpha))$ and similarly for $D'(\alpha)$. 

We claim that for $\alpha\in \mc C_0$ one has $f(D(\alpha))\subset D'(\alpha)$ (so $D(\alpha)$ is free for $f$). Indeed, since $f(\alpha)$ is disjoint from $D(\alpha)$, the latter set is contained in a single connected component of $U\sm f(\alpha)= f(D(\alpha))\cup f(D'(\alpha))$. If $D(\alpha)$ intersects $f(D(\alpha))$ then $D(\alpha) \subset f(D(\alpha))$, which implies $\cl_U D(\alpha)\subset f(D(\alpha))$. But then $f^{-1}(\cl_U D(\alpha))\subset D(\alpha)$, so $D(\alpha)$ is a (negative) boundary trapping region contradicting our hypothesis.

Define a partial order among free cross-cuts of $U$ by writing $\alpha\prec \beta$ if $D(\alpha)\subset D(\beta)$. Let $\mc C_1 = \{f(\alpha):\alpha\in \mc C_0\}$, and $\mc C = \mc C_0\cup \mc C_1$. Note that for $\alpha\in \mc C$ one still has $f(D(\alpha)) \subset D'(\alpha)$, since we just showed this in the case that $\alpha\in \mc C_0$, and if $\alpha\in \mc C_1$ one has $\alpha = f(\alpha')$ for some $\alpha'\in \mc C_0$ so $f(D(\alpha)) = f(D(f(\alpha')) = f^2(D(\alpha')) \subset f(D'(\alpha')) = D'(f(\alpha'))$. 

Let us note that for each $c>0$ there are at most finitely many elements of $\mc C$ with diameter greater than $c$, since $\mc C$ consists of pairwise disjoint arcs in $\bd B \cup \bd f(B)$. As a consequence, the family $\mc C$ is locally finite in $U$. 

Denote by $\mc C^*$ the set of all elements of $\mc C$ which are maximal (in $\mc C$) with respect to $\prec$. We claim that for every $\alpha\in \mc C$ there exists $\alpha_*\in \mc C^*$ such that $\alpha\preceq \alpha_*$. In fact, one can show that there are finitely many elements $\alpha'\in \mc C$ such that $\alpha\prec \alpha'$. To see this, note that if $\alpha\prec \alpha'$ we have $f(D(\alpha))\subset f(D(\alpha')) \subset D'(\alpha')$ whereas $\alpha\subset D(\alpha')$, so $\alpha'$ separates $\alpha$ from $f(\alpha)$ in $U$. From this one may conclude that the diameter of $\alpha'$ is bounded below by some positive number $c$ which depends only on $\alpha$, so $\{\alpha' : \alpha\prec \alpha'\}$ is finite as claimed.

We now claim that there is an element $\alpha \in \mc C^*\cap \mc C_0$ such that $f(\alpha) \in \mc C^*$. Indeed, assume for a contradiction that this is not the case, and let $\alpha_0 \in \mc C^*$ be any element. Since $\alpha_0\in \mc C_0\cup \mc C_1$, we consider two possibilities: suppose first that $\alpha_0\in \mc C_0$. Since $f(\alpha_0)\notin \mc C^*$ by our assumption, there must exist $\alpha_1\in \mc C^*$ such that $f(\alpha_0)\prec \alpha_1$. Moreover, $\alpha_1$ cannot be in $\mc C_1$, since that would mean that $f^{-1}(\alpha_1)\in \mc C$ and $\alpha_0 \prec f^{-1}(\alpha_1)$ contradicting the maximality of $\alpha_0 \in \mc C^*$. Thus $\alpha_1\in \mc C_0$, and we may repeat this argument inductively to obtain an infinite sequence $(\alpha_i)_{i\in \N}$ of elements of $\mc C^*\cap \mc C_0$ such that $f(\alpha_i)\prec \alpha_{i+1}$. Note that $\{D(\alpha_i):i\in \N\}$ is a family of pairwise disjoint cross-sections (since each $\alpha_i$ is maximal) and it is locally finite in $U$ (which follows from the fact that $\{\alpha_i : i\in \N\}\subset \mc C$ is locally finite in $U$). Thus $W=\bigcup_{i\in \N} D(\alpha_i)$ is a (positive) boundary trapping region with $\bd_U W=\bigcup_{i\in\N}\alpha_i\subset \bd B$, contradicting our hypothesis.
Now suppose that $\alpha_0\in \mc C_1$. Then $f^{-1}(\alpha_0)\in \mc C_0$, and there must exist $\alpha_1\in \mc C^*$ such that $f^{-1}(\alpha_0)\prec \alpha_1$ (since otherwise $\alpha = f^{-1}(\alpha_0)$ would be such that both $\alpha$ and $f(\alpha)$ belong to $\mc C^*$ contradicting our assumption). Moreover, $\alpha_1\in \mc C_1$, since in the case that $\alpha_1\in \mc C_0$ one has $\alpha_0\prec f(\alpha_1)\in \mc C$ contradicting the maximality of $\alpha_0$. 
Thus by a similar argument we obtain a sequence $(\alpha_i)_{i\in \N}$ in $\mc C^*\cap \mc C_1$ such that $f^{-1}(\alpha_i)\prec \alpha_{i+1}$, and $W=\bigcup_{i\in \N} D(\alpha_i)$ is a (negative) boundary trapping region, and moreover $f^{-1}(W)$ is a negative boundary trapping region with $\bd_U W \subset \bd B$ contradicting our hypotheses. This proves our claim.

Finally, given $\alpha\in \mc C^*\cap \mc C_0$ such that $f(\alpha)\in \mc C^*$, we claim that there exists an arc $\sigma$ in $U$ joining a point $z\in \alpha$ to $f(z)\in f(\alpha)$ which is disjoint from all elemeents of $\mc C$ except at its endpoints.
To see this, let $K$ be the union of all elements of $\mc C$ except $\alpha$ and $f(\alpha)$, which is a closed subset of $U$. If the closed set $K$ separates $\alpha$ from $f(\alpha)$ in $U\simeq \R^2$ then some connected component $\beta$ of $K$ must separate $\alpha$ from $f(\alpha)$ (see for instance \cite[Theorem 14.3]{newman}). But the connected components of $K$ are elements of $\mc C$, so $\beta\in \mc C$, and the fact that $\beta$ separates $\alpha$ from $f(\alpha)$ implies that either $\alpha\prec \beta$ (contradicting the maximality of $\alpha$) or $f(\alpha)\prec \beta$ (contradicting the maximality of $f(\alpha)$. Thus $\alpha$ and $f(\alpha)$ are in the same connected component of $U\sm K$, which means that there is an arc $\sigma$ in $U\sm K$ joining any given $z\in \alpha$ to $f(z)\in f(\alpha)$. Of course $\sigma$ may be chosen so that it only intersects $\alpha$ and $f(\alpha)$ at its endpoints. Thus $\sigma$ only intersects $\bd B$ or $\bd f(B)$ at its endpoints, from which the required properties follow.
\end{proof}

\subsection{Proof of the main theorem}

Only Theorem \ref{th:3bd-trap} requires a proof, since the other results follow as explained in the introduction. The proof follows from Lemma \ref{lem:mccl} and a simple observation using the fact that $f$ preserves orientation.

Let $f$ and $B$ be as in the statement of Theorem \ref{th:3bd-trap}, and let $U_1, U_2, U_3$ be three disjoint $f$-invariant open topological disks such that $U_i\cap B\neq \emptyset$. Note that the interior of $B$ must intersect each $U_i$ as well. Assume for a contradiction that $\bd B$ is non-trapping in all three disks. Since the interior of $B$ intersects both $U_i$ and its complement, it must intersect $\bd U_i$. Applying Lemma \ref{lem:mccl} on each $U_i$ we know that there exists $z_i \in \bd B\cap U_i$ and an arc $\sigma_i\subset U_i$ joining $z_i$ to $z_i' := f(z_i)$ which is disjoint from $B\cup f(B)$ except at its endpoints. Let $\gamma = \bd B$, oriented positively so that $\inter B$ is locally on the left of $\gamma$, and let $\gamma' = f(\gamma)$. Since $f$ preserves orientation, $\inter f(B)$ is locally on the left of $f(\gamma)$ as well. Note that since $\sigma_i$ is disjoint from $\inter B$, this implies that  $\sigma_i$ is locally on the right of $\gamma$ near $z_i$ and on the right of $\gamma'$ near $z_i'$. 

By permuting the points if necessary, we may assume that $z_2$ lies in the positive subarc $\gamma_{13}$ of $\gamma$ joining $z_1$ to $z_3$. Since $f$ preserves orientation, $f(\gamma_{13})$ is the positive subarc of $\gamma'$ joining $z_1'$ to $z_3'$. In particular, the positive subarc $\gamma_{31}'$ of $\gamma'$ joining $z_3'$ to $z_1'$ does not contain $z_2'$. 
Let $\eta = \gamma_{13} * \sigma_3 * \gamma_{31}' * \sigma_1^{-1}$ (where $\sigma_1^{-1}$ denotes the arc $\sigma_1$ reversed). Then $\eta$ is a simple loop. Let $D$ be the component of the complement of $\eta$ which lies locally to the right of $\eta$. Since $\inter B$ is locally to the left of $\gamma_{13}$ and is disjoint from $\eta$, we have that $\inter B$ is disjoint from $D$, so its closure $B$ is also disjoint from $D$. For similar reasons, $f(B)$ is disjoint from $D$, and in particular $z_2'\notin D$. Moreover, since $z_2'\notin \eta$ we have $z_2'\notin \ol{D}$.

On the other hand, if $\dot{\sigma_2}$ denotes the arc $\sigma_2$ with its endpoints removed (which is disjoint from $\eta=\bd D$), we see that $\dot{\sigma_2}$ is locally on the left of $\gamma_{13}$ near $z_2$, and therefore $\dot{\sigma_2}\subset D$. Hence $\sigma_2\subset \ol{D}$, which contradicts the fact that $z_2'\notin \ol{D}$.
This proves Theorem \ref{th:3bd-trap}.\qed

\begin{figure}
\includegraphics[width=\textwidth]{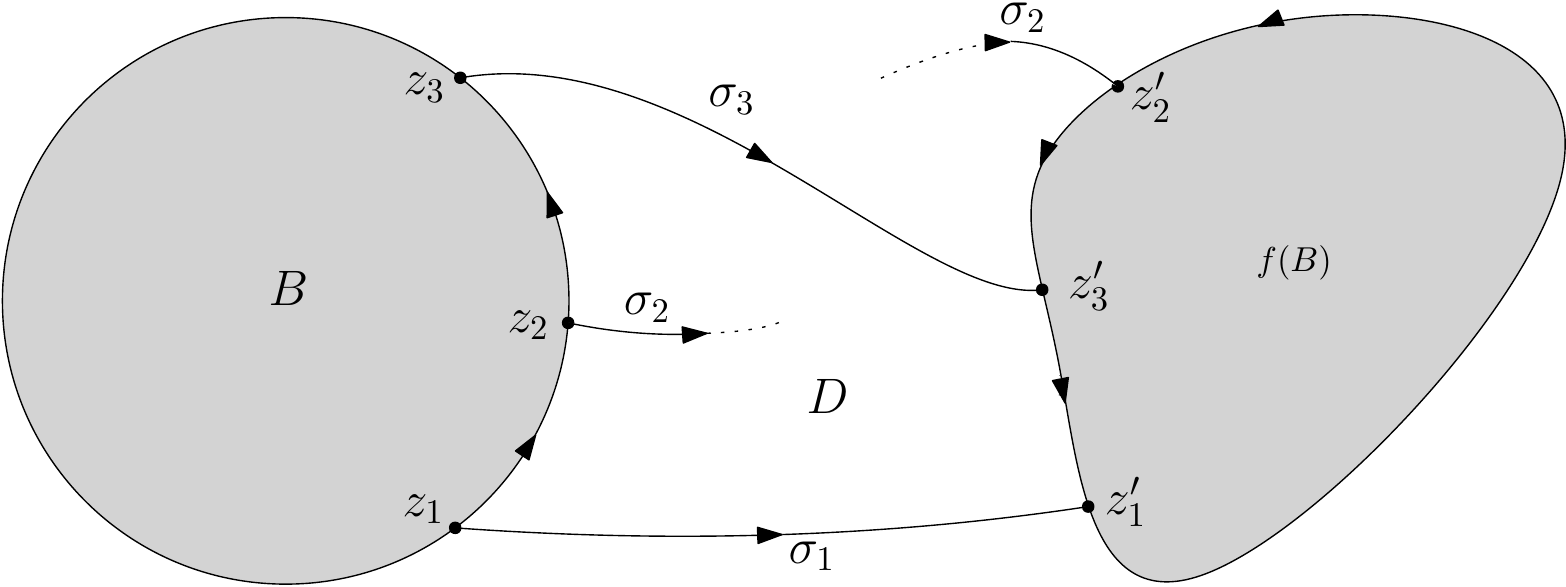}
\caption{Proof of main theorem}
\end{figure}

\subsection{An application: homotopically bounded disks}\label{sec:bounded}

Suppose that $S$ is a closed orientable surface, and $\pi\colon \til S\to S$ the universal covering map. We may endow $S$ with a metric of constant curvature, and $\til S$ with the lifted metric. For a set $X\subset \til S$ we denote by $\diam(X)$ its diameter. The \emph{covering diameter} of an open topological disk $U\subset S$ is defined as $\diamup(U) = \diam(\til U) \in \R^+_0 \cup \{\infty\}$, where $\til U$ is any connected component of $\pi^{-1}(U)$ (and it is independent from the choice of $\til U$). When $\diamup(U)< \infty$, we say that $U$ is homotopically bounded. 
The next result was one of the key theorems in \cite{kt-strictly} and \cite{kt-fully}. Using our main theorem, we are able to give a very direct proof of it. 
\begin{theorem} 
Let $f\colon S\to S$ be a homeomorphism homotopic to the identity of a closed orientable surface such that $\fix(f)$ is inessential. Then there exists $M>0$ such that any open $f$-invariant topological disk $U$ without boundary trapping regions satisfies $\diamup(U)\leq M$.
\end{theorem}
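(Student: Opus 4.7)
The plan is to reduce to Theorem~\ref{th:3bd-trap} via the universal covering. The case $S=\SS^2$ is trivial, so we assume $S$ has positive genus. Let $\pi\colon\til S\to S$ be the universal covering, and lift $f$ to a homeomorphism $\til f\colon \til S\to \til S$ commuting with the deck transformations. The surface $\til S$ admits a natural sphere compactification $\hat S$---obtained by adding one point at infinity when $S=\T^2$, and by adding the circle at infinity together with an exterior point when $S$ has higher genus---and $\til f$ extends to an orientation-preserving homeomorphism $\hat f\colon \hat S\to \hat S$.

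Now let $U\subset S$ be $f$-invariant and without boundary trapping regions, and fix a component $\til U$ of $\pi^{-1}(U)$. Since $\til f$ permutes the components of $\pi^{-1}(U)$, replacing $\til f$ with $T^{-1}\circ\til f$ for a suitable deck transformation $T$ if necessary, we may assume $\til f(\til U)=\til U$; the resulting $\hat f$ is still an orientation-preserving homeomorphism of $\hat S$. The family $\{T(\til U):T\in\deck\}$ then consists of pairwise disjoint $\hat f$-invariant open topological disks in $\hat S$ (pairwise disjointness coming from the fact that distinct deck translates yield distinct components of $\pi^{-1}(U)$, since $U$ is simply connected), and none of them admits a boundary trapping region for $\hat f$, for otherwise it would push forward to a boundary trapping region for $f$ in $U$.

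It now suffices to construct a closed topological disk $B\subset\til S$ with $\hat f(B)\cap B=\emptyset$ meeting three distinct deck translates $T_1(\til U),T_2(\til U),T_3(\til U)$, for then Theorem~\ref{th:3bd-trap} applied to $\hat f$ in $\hat S$ forces $\bd B$ to be trapping in one of them, contradicting the previous paragraph. The inessentiality of $\fix(f)$ provides a topological disk $D\subset S$ containing $\fix(f)$ in its interior, and the displacement $x\mapsto d(f(x),x)$ is strictly positive, hence bounded below by some $\delta>0$, on the compact set $S\sm\inter D$. By deck-equivariance of $\til f$, the same lower bound holds in $\til S\sm\pi^{-1}(\inter D)$, and so any closed topological disk of diameter less than $\delta/2$ contained in this region is free for $\til f$, and hence for $\hat f$.

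The main obstacle lies in finding such a free disk meeting three deck translates of $\til U$. The intuition is the following: if $\diam(\til U)$ is very large while $\diam(U)\leq \diam(S)$ is fixed, the homeomorphism $\pi|_{\til U}\colon\til U\to U$ is extremely distorted, so that $\til U$ must contain pairs of points far apart in $\til S$ but with arbitrarily close projections in $S$, which translates (via a deck transformation) into nearby deck translates of $\til U$ inside a fundamental region away from $\pi^{-1}(\cl D)$. A counting/pigeonhole argument on how many distinct deck translates must cluster near a common point, once the number of such close pairs exceeds a threshold depending on $S$ and $\delta$, produces the required triple of translates meeting a single small free disk $B$. Carrying out this counting argument rigorously, and in particular producing the explicit bound $M$, is the essential technical step.
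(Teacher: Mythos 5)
Your proposal follows the paper's general strategy (lift to the universal cover, compactify to a sphere, find a free closed disk meeting three pairwise disjoint invariant translates of $\til U$, and apply Theorem \ref{th:3bd-trap}), but the step you defer as ``the essential technical step'' is precisely the content of the paper's proof, so as written there is a genuine gap. The paper produces the free disk and the explicit bound as follows: inessentiality of $\fix(f)$ is used (via \cite[Remark 2.1]{kt-fully}) to obtain a \emph{compact connected} set $Q\subset\til S$ with $\pi(Q)=S$ and $\bd Q\cap\fix(\til f)=\emptyset$; one then covers $\bd Q$ by finitely many closed disks $B_1,\dots,B_m$ that are free for $\til f$, and sets $M=(2m+1)\diam(Q)$. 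If $\diam(\til U)>M$, connectedness forces $\til U$ to meet $T(\bd Q)$ for at least $2m+1$ deck transformations $T$, and the pigeonhole principle gives one $B_i$ meeting $T^{-1}\til U$ for three distinct $T$, which is exactly the configuration needed for Theorem \ref{th:3bd-trap}. Note also that inessentiality is not used merely to bound the displacement away from a disk containing $\fix(f)$ (indeed a compact inessential set need not lie in a single disk, and for the displacement bound any neighborhood would do); its real role is to allow the boundary of the fundamental-domain-like set $Q$ to avoid the lifted fixed point set. Your heuristic about ``pairs of far-apart points with close projections'' does not by itself yield three translates meeting one common free disk, so the counting cannot be waved through.

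There is a second, independent gap: after replacing the equivariant lift $\til f_0$ by $\til f=T_0^{-1}\til f_0$ so that $\til f(\til U)=\til U$, it is \emph{not} true in general that every deck translate $T(\til U)$ is $\til f$-invariant. One computes $\til f(T\til U)=(T_0^{-1}TT_0)\til U$, which equals $T\til U$ only when $T$ commutes with $T_0$. This is automatic for $S=\T^2$ (abelian deck group), but fails for hyperbolic $S$ when $T_0\neq\id$; this is exactly why the paper treats that case separately by a reduction to the toral/annular situation as in \cite[Proposition 4.8]{kt-fully}, while your argument silently assumes the commutation. A minor further point: for higher genus your proposed compactification (hyperbolic plane plus the circle at infinity plus ``an exterior point'') is not a sphere; the one-point compactification of $\til S\simeq\R^2$ already works, with $\til f$ extended by fixing the point at infinity.
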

\begin{proof}
Denote by $G$ the group of deck transformations of $\pi$ (which are all isometries of $\til S$). Since $f$ is isotopic to the identity, there exists a lift $\til f_0$ of $f$ which commutes with every $T\in G$. Fix a connected component $\til U$ of $\pi^{-1}(U)$. Since $U$ is invariant there exists $T_0\in G$ such that $\til f_0(\til U)=T_0\til U$. Consider $\til f = T_0^{-1}\til f_0$, so that $\til f(\til U) = \til U$.

If $S=\SS^2$ there is nothing to be done. Suppose now that $S=\T^2$. In that case, since $G$ is abelian, $\til f$ also commutes with every element of $G$, and in particular for each $T\in G$ one has $\til f(T\til U) = T\til U$. The fact that $\fix(f)$ is inessential means that one may find a compact connected subset $Q$ of $\til S$, such that $\pi(Q) = S$ and $\fix(\til f)\cap \bd Q=\emptyset$ (see \cite[Remark 2.1]{kt-fully}). Since $\bd Q$ is compact and has no fixed points of $\til f$, one may cover $\bd U$ by a finite family $B_1, \dots, B_m$ of closed disks such that $\til f(B_i)\cap B_i=\emptyset$ for each $i$. If $\til U$ has diameter greater than $M:=(2m+1)\diam(Q)$, then $\til U$ must intersect $T(\bd Q)$ for at least $2m+1$ different values of $T\in G$. As a consequence, there exists $i\in \{1,\dots, m\}$ such that $B_i$ intersects $T^{-1}\til U$ for at least three different values of $T\in G$. These are three pairwise disjoint $\til f$-invariant topological disks, so by Theorem \ref{th:3bd-trap} there is $T\in G$ such that $T^{-1}\til U$ has a boundary trapping region $\til W$ for $\til f$. Since $U$ is a topological disk, $\pi|_{T^{-1}\til U}$ is a homeomorphism onto $U$, and it follows that $\pi(W)$ is a boundary trapping region in $U$ for $f$ contradicting our hypothesis.

Finally, if $S$ is a hyperbolic surface and $T_0=\id$ (which means that $\til f$ commutes with every element of $G$) the same proof used for $\T^2$ applies. Thus it remains to consider the case where $T_0\neq \id$. In that case we know that $\til f$ commutes with $T_0$ but not necessarily with other elements of $G$. However, this case can be reduced to the case of $\T^2$ as in the proof of \cite[Proposition 4.8]{kt-fully}. Since the argument is identical, we omit these details and refer the reader to \cite{kt-fully}.
\end{proof}

\subsection*{Acknowledgements}
A. Koropecki was partially supported by the German Research Council (Mercator fellowship, DFG-grant OE 538/9-1), as well as FAPERJ-Brasil and CNPq-Brasil.
F. A. Tal was partially supported by the Alexander Von Humboldt foundation and by FAPESP, CNPq and CAPES. Both would like to thank FSU-Jena and T. J\"ager for the hospitality during the writing of this paper. 

\bibliographystyle{amsalpha}

\end{document}